\newcommand\E{\operatorname{\mathbb E{}}}
\renewcommand\Pr{\operatorname{\mathbb P{}}}
\newcommand\Bin{\operatorname{Bin}}
\newcommand{\eps}{\epsilon}
\newcommand{\cD}{\mathcal{D}}
\newcommand{\cE}{\mathcal{E}}
\newcommand{\cF}{\mathcal{F}}
\newcommand{\cG}{\mathcal{G}}
\newcommand{\cH}{\mathcal{H}}
\newcommand{\cJ}{\mathcal{J}}
\newtheorem{theorem}{Theorem}
\newtheorem{lemma}[theorem]{Lemma}
\newtheorem{remark}{Remark}
\newcommand\bigpar[1]{\bigl(#1\bigr)}
\newcommand\Bigpar[1]{\Bigl(#1\Bigr)}
\newcommand\biggpar[1]{\biggl(#1\biggr)}
\newcommand\Bigsqpar[1]{\Bigl[#1\Bigr]}
\newcommand\ceil[1]{\lceil#1\rceil}
\newcommand\floor[1]{\lfloor#1\rfloor}
\newcommand{\indic}[1]{\mathbbm{1}_{\{{#1}\}}}
\newcommand{\pc}{p_{\mathrm c}} 
\newcommand{\qf}{q_{\mathrm f}} 
\let\OLDthebibliography\thebibliography
\renewcommand\thebibliography[1]{
	\OLDthebibliography{#1}
	\setlength{\parskip}{0pt}
	\setlength{\itemsep}{0pt plus 0.3ex}
}
\title{Note on down-set thresholds}
\author{Lutz Warnke%
\thanks{Department of Mathematics, University of California, San Diego, La Jolla CA~92093, USA. 
E-mail: {\tt lwarnke@ucsd.edu}. 
Supported by NSF~CAREER grant~DMS-2225631, and a Sloan Research Fellowship.}}
\date{April~26, 2023; revised October~24, 2023}
\begin{document}

\maketitle

\begin{abstract}
Gunby--He--Narayanan showed that the logarithmic gap predictions of Kahn--Kalai and Talagrand 
(proved by Park--Pham and Frankston--Kahn--Narayanan--Park) about thresholds of up-sets do not apply to down-sets.  
In particular, for the down-set of triangle-free graphs, they showed  that there is a polynomial gap between the threshold and the factional expectation threshold. 
In this short note we give a simpler proof of this result, 
and extend the polynomial threshold gap to down-sets of~$F$-free~graphs.
\end{abstract}

\section{Introduction}
Thresholds are a central theme in the theory of random discrete structures, ever since the seminal work of Erd\H{o}s and Renyi~\cite{ER1960}. 
The basic idea is that a threshold corresponds to the density at which a random set or random graph changes from unlikely satisfying to likely satisfying some property of interest.
The general thresholds predictions of Kahn--Kalai~\cite{KK} and Talagrand~\cite{TL}, 
which were proved in recent breakthroughs by Park and Pham~\cite{PP} and Frankston, Kahn, Narayanan and Park~\cite{FKNP}, 
show that the (fractional) expectation threshold differs by at most a logarithmic factor from the threshold for up-sets, i.e., monotone increasing properties. 
The conceptual point is that these logarithmic threshold gaps are useful in both theory and applications, since the expectation thresholds are usually much easier to locate than the threshold~itself. 

Recently, Gunby, He and Narayanan~\cite{GHN2021} demonstrated that these logarithmic gaps do not carry over to the thresholds of down-sets, i.e., monotone decreasing properties. 
Indeed, the main result of~\cite{GHN2021} is the special case of Theorem~\ref{thm:main} below where~$F=\Delta$ is a triangle: 
it states that, for the down-set of triangle-free graphs, there is a polynomial gap between the threshold and the factional expectation~threshold. 
The proof in~\cite{GHN2021} 
is based on a non-trivial large-deviation machinery that is specific to triangles. 
Given the importance of thresholds in the theory of random discrete structures,
it is desirable to have simple proofs for such fundamental~results. 

The aim of this short note is to record 
a much simpler proof of a more general polynomial gap result for the thresholds of down-sets: 
Theorem~\ref{thm:main} shows that, for the down-set of~\mbox{$F$-free} graphs, there is a polynomial gap between the threshold and the factional expectation threshold 
for essentially all non-empty graphs~$F$ of interest (the only excluded graphs~$F$ consist of a matching plus potentially some isolated~vertices). 
\begin{theorem}[Main result]\label{thm:main}
Given a graph~$F$ with maximum degree at least two, let~$\cF_n$ be the down-set of~$F$-free graphs on vertex-set~$[n]$.
Then the expectation threshold~$q(\cF_n)$ and the fractional expectation-threshold~$\qf(\cF_n)$ of~$\cF_n$ 
are asymptotically both a polynomial factor larger than the threshold~$\pc(\cF_n)$ of~$\cF_n$: 
there are constants~${\gamma=\gamma(F)>0}$ and~${n_0=n_0(F) \ge 1}$ such that~$q(\cF_n),\qf(\cF_n) \ge n^{\gamma} \pc(\cF_n)$ for all~$n \ge n_0$. 
\end{theorem}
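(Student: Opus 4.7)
My plan establishes the polynomial gap by two complementary estimates.

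First, I would prove $\pc(\cF_n) \leq O(n^{-1/m(F)})$ by applying the classical threshold theory for $F$-containment in $G_{n,p}$ (Bollob\'as, via the second moment method or Janson's inequality): since $F$ has maximum degree $\geq 2$, it contains $P_3$, giving the maximum subgraph density $m(F) \geq 2/3 > 0$, and hence $\pc(\cF_n)$ decays polynomially in $n$.

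Second, I would prove $\qf(\cF_n) \geq p^\star$ for some $p^\star \geq n^\gamma \pc(\cF_n)$ via LP duality: construct a feasible dual packing $y : \cF_n \to \mathbb{R}_{\geq 0}$ with $\sum_D y(D) > 1/2$ at $p = p^\star$, satisfying $\sum_{D \subseteq G,\, D \in \cF_n} y(D) \leq (1-p)^{N-|G|}$ for every test set $G$ of edges. The down-set inequality $q(\cF_n) \geq \qf(\cF_n)$ (fractional relaxation yields a smaller threshold in the down-set setting) then transfers the bound to $q$.

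The key combinatorial step is the choice of the ``spread'' support for $y$. For $F$ non-bipartite with $\chi(F) \geq 3$, I would take complete $(\chi(F)-1)$-partite graphs on $[n]$ (possibly restricted to balanced partitions); these are $F$-free by the chromatic number, exponentially numerous, and combinatorially spread in the sense that the number contained in any $G$ depends cleanly on the component structure of the complement $\bar G$. For $F$ bipartite with max degree $\geq 2$, I would build the support from $F$-free extremal configurations, leveraging that $F \supseteq P_3$ (so matchings are $F$-free, which suffices when $m(F)$ is small) or K\H{o}v\'ari--S\'os--Tur\'an-type constructions (for denser bipartite $F$ such as $K_{s,t}$).

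The main obstacle is in verifying the dual constraints uniformly across all test sets $G$, particularly those whose complement $\bar G$ has structure aligned with the spread family (e.g., unions of balanced cliques matching the partition classes in the non-bipartite case). Naively uniform packings can be binding at such ``bad'' $G$ and only yield a sub-polynomial gap; the fix is to weight $y$ non-uniformly or restrict the spread family, ensuring the worst-case constraint becomes binding only at a $p$ polynomially above $\pc(\cF_n)$. This technical core is expected to be considerably shorter than the large-deviation machinery of Gunby--He--Narayanan, since it proceeds by direct LP duality rather than detailed concentration analysis.
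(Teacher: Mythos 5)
Your proposal correctly identifies the upper bound $\pc(\cF_n)=O(n^{-1/m(F)})$ from Bollob\'as and the correct LP-dual formulation: $\qf(\cF_n)>p$ follows from exhibiting $y:\cF_n\to[0,\infty)$ with $\sum_{D}y(D)>1/2$ and $\sum_{D\subseteq S}y(D)\le(1-p)^{|X\setminus S|}$ for all $S$. But what you propose from there on is not a proof; the step you flag as the ``main obstacle'' (weighting $y$ or pruning the spread family so that the worst-case constraint binds only at $p$ polynomially above $\pc$) is precisely the entire difficulty, and you have not shown how to do it or even which family/weighting would work. Your candidate supports are also not clearly viable: complete $(\chi(F)-1)$-partite graphs form a family of size roughly $(\chi(F)-1)^n$ that is very far from ``spread'' at sets $S$ whose complement aligns with a small number of partition classes, and for bipartite $F$ the plan (``matchings suffice when $m(F)$ is small,'' ``KST-type constructions'') is not an argument. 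You also never name the target exponent --- the polynomial gap ultimately rests on a specific density comparison, and your sketch does not identify what $p^\star$ should be or why it exceeds $\pc$ by a power of $n$.

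The paper takes a genuinely different and, crucially, complete route. Instead of constructing an explicit dual packing, it works with a single random $F$-free graph: pick a minimal $J\subseteq F$ achieving $m_2(F)=\max_{J:v_J\ge3}\tfrac{e_J-1}{v_J-2}$, take $G_{n,p}$ with $p=\Theta(n^{-1/m_2(F)})$, remove the edges of a maximal edge-disjoint packing of $J$-copies to get an $F$-free graph $G_{n,p}(J)$, and show via Chernoff and a union bound that $G_{n,p}(J)$ still meets every ``complement'' graph $H$ with small total weight $\sum_H \lambda(H)e^{-\delta e(H)p}\le 1/2$. By the probabilistic method, any (fractional) certificate at this $p$ fails, giving $q(\cF_n),\qf(\cF_n)=\Omega(n^{-1/m_2(F)})$; the polynomial gap then reduces to the elementary inequality $m_2(F)>m(F)$, which holds whenever $F$ has a vertex of degree at least two. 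This deletion-method construction handles bipartite and non-bipartite $F$ uniformly, avoids the need to verify LP constraints across all $S$, and is where the actual content lies. Your plan, if it could be completed, would be a different proof, but as written it stops at exactly the point where the work begins and does not identify the density $m_2(F)$ that controls the answer.
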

\begin{remark}\label{rem:main}
The proof shows that~$q(\cF_n),\qf(\cF_n) = \Omega(n^{-1/m_2(F)})$ and~$\pc(\cF_n) = \Theta(n^{-1/m(F)})$,  
with the graph-densities~$m_2(F):=\max_{J\subseteq F: v_J \ge 3}\tfrac{e_J-1}{v_J-2}$ and~$m(F):=\max_{J\subseteq F: v_J \ge 1}\tfrac{e_J}{v_J}$. 
\end{remark}

Gearing up towards proofs, we now introduce some standard threshold definitions and terminology from~\cite{KK,TL,DK,FKNP,FKP,PP,GHN2021}. 
Given a finite set~$X$ and an inclusion probability~$p \in [0,1]$, 
we write~$\mu_p$ for the product measure on the power set~$2^X$ of~$X$ given by~${\mu_p(S) = p^{|S|}(1-p)^{|X\setminus S|}}$ for all~$S \subseteq X$. 
Given a family~$\cF \subseteq X$, we call~$\cF$ a down-set if it is closed under taking subsets (i.e., if~$A \in \cF$ and~$B \subseteq A$ then~$B \in \cF$), 
and~$\cF$ an up-set if it is closed under taking supersets (i.e., if~$A \in \cF$ and~$B \supseteq A$ then~$B \in \cF$). 
Excluding the uninteresting cases~$\cF=\emptyset$ and~$\cF=2^{\cF}$, for any down-set or up-set~$\cF$ the \emph{threshold}~$\pc(\cF)$ is defined as the unique~$p \in [0,1]$ for which~$\mu_p(\cF)=1/2$ 
(this is well-defined, since~$\mu_p(\cF)=\sum_{S \in \cF}\mu_p(S)$ is 
 strictly monotone in~$p$). 
For~$X=\binom{[n]}{2}$ and the down-set~$\cF_n$ of~$F$-free graphs on vertex-set~$[n]$ considered in Theorem~\ref{thm:main},
the `small subgraphs' random graphs result of Bollob\'{a}s~\cite{SGTR} from~1984 
shows that the threshold of~$\cF_n$~satisfies
\begin{equation}\label{eq:pc}
\pc(\cF_n) = \Theta(n^{-1/m(F)}).
\end{equation} 

Hence the real content of Theorem~\ref{thm:main} and Remark~\ref{rem:main} are the bounds~$q(\cF_n),\qf(\cF_n) = \Omega(n^{-1/m_2(F)})$ on the expectation threshold and fractional expectation threshold. 
We shall establish these in the following two~subsections, 
where for clarity of exposition we will discuss both thresholds separately, in increasing level of~generality
(since~$q(\cF_n) \ge \qf(\cF_n)$ holds, as we shall see in Section~\ref{sec:fractional}).

\subsection{Expectation Threshold: Polynomial Gap}\label{sec:expectation}
The \emph{expectation threshold}~$q(\cF)$ of a down-set~$\cF$ is defined as the smallest value of~$p \in [0,1]$ for which there exists a `certificate'~$\cG \subseteq 2^X$ 
with~$\sum_{S \in \cG}(1-p)^{|X \setminus S|} \le 1/2$ and~$\cF \subseteq \cG^{^{\downarrow}}$, 
where~$\cG^{\downarrow}:=\bigcup_{S \in \cG}\{T : T \subseteq S\}$ is the decreasing family generated by~$\cG$; see~\cite{GHN2021}. 
Although not important for this note, the crux of this definition is that~$\pc(\cF) \le q(\cF)$ follows immediately,  
since for~$p=q(\cF)$ we have the simple `first moment' upper~bound 
\[
\mu_p(\cF) \le \mu_p(\cG^{^{\downarrow}}) \le \sum_{S \in \cG} \mu_p(\{T: T \subseteq S\}) = \sum_{S \in \cG}(1-p)^{|X \setminus S|} \le 1/2 .
\]

For~$X=\binom{[n]}{2}$ and the down-set~$\cF_n$ of~$F$-free graphs on vertex-set~$[n]$ considered in Theorem~\ref{thm:main},
we now claim 
that our main technical result (Lemma~\ref{lem:main:technical} below) implies the expectation threshold lower bound
\begin{equation}\label{eq:qc}
q(\cF_n) = \Omega(n^{-1/m_2(F)}) .
\end{equation} 
Indeed, writing~$q=q(\cF_n$), note that by definition there exists a certificate~$\cG$ consisting of a collection of graphs on vertex-set~$[n]$ with~$\cF_n \subseteq \cG^{^{\downarrow}}$ and~$\sum_{J \in \cG}(1-q)^{\binom{n}{2}-e(J)} \le 1/2$. 
Using the constants~$\eps,\delta \in (0,1/2]$ from Lemma~\ref{lem:main:technical}, 
suppose for the sake of contradiction that~$q \le 1-e^{-\delta p}$ for~$p:=\eps n^{-1/m_2(H)}$. 
Defining~$\cH$ as the collection of complements of the graphs in~$\cG$ (where we take complements with respect to the edge-set), 
we then have
\[
\sum_{H \in \cH}\exp(-\delta e(H)p) \le \sum_{J \in \cG}(1-q)^{\binom{n}{2}-e(J)} \le 1/2.
\] 
By Lemma~\ref{lem:main:technical} there thus is an $F$-free graph~$G \in \cF_n$ that shares at least one edge with every~$H\in\cH$, 
which by construction means that~$G \in \cF_n$ shares at least one non-edge with every graph in~$\cG$.
Recalling that~$\cG^{\downarrow}=\bigcup_{J \in \cG}\{F: F \subseteq J\}$ contains only subgraphs of graphs in~$\cG$, 
we infer that~$G \notin \cG^{\downarrow}$.
Since this contradicts~$\cF_n \subseteq \cG^{^{\downarrow}}$, 
using~$\delta p \le \delta \eps \le 1/4$ we readily infer that
\[
q(\cF_n) =q \ge 1-e^{-\delta p} \ge \delta p/2=\delta \eps/2 \cdot n^{-1/m_2(H)},
\] 
establishing the claimed lower bound~\eqref{eq:qc}.
\begin{lemma}[Main technical result]\label{lem:main:technical}%
For any graph~$F$ with maximum degree at least two, 
there are constants ${\eps \in (0,1/2]}$ and $\delta=\delta(F) \in (0,1/2]$ such that the following holds for all~$n \ge 1$ and $0 < p \le \eps n^{-1/m_2(F)}$. 
If~$\cH$ is a collection of graphs on vertex-set~$[n]$~with
\begin{equation}\label{equation:prob-assumption}
\sum_{H\in\cH}\exp\Bigpar{-\delta e(H)p} \le 1/2, 
\end{equation}
then there is an $F$-free graph~$G$ on vertex-set~$[n]$ that shares at least one edge with every~$H\in\cH$.
\end{lemma}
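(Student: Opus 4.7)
The plan is to use the probabilistic deletion method. I would sample $G_1 \sim G(n,p)$ with $p = \epsilon n^{-1/m_2(F)}$ for a small constant $\epsilon = \epsilon(F) > 0$, and set $G := G_1\setminus D$, where $D$ is the set of edges of $G_1$ lying in at least one copy of $F$ in $G_1$. By construction $G$ is $F$-free, so it suffices to show that for each $H\in\cH$,
\[
\Pr[G \cap H = \emptyset] \le \exp(-\delta e(H)p),
\]
with $\delta = \delta(F) > 0$. A union bound combined with the hypothesis~\eqref{equation:prob-assumption} then gives $\Pr[\exists H\in\cH: G\cap H = \emptyset] \le 1/2 < 1$, and a valid $G$ exists.

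Before tackling the per-$H$ bound, I would reduce to the case where $F$ is 2-balanced, meaning $m_2(F) = (e_F - 1)/(v_F - 2)$. Since $F$ has maximum degree at least two it contains $P_3$, so $m_2(F) \ge 1$. Any subgraph $J \subseteq F$ attaining the maximum in $m_2(F)$ is then itself 2-balanced, has $v_J \ge 3$, and (because $e_J \ge v_J - 1$) has maximum degree at least two. Since any $J$-free graph is $F$-free, replacing $F$ by such a $J$ lets us assume $F$ is 2-balanced. A union bound over $F$-copies through a given edge then yields
\[
\Pr[e \text{ lies in some copy of }F\text{ in }G_1 \mid e \in G_1] \le O\bigl(n^{v_F-2}p^{e_F-1}\bigr) = O(\epsilon^{e_F-1}),
\]
which can be made arbitrarily small by choosing $\epsilon$ small.

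To promote this per-edge estimate to the per-$H$ bound, the natural route is to condition on the random intersection $S := G_1 \cap H$:
\[
\Pr[G \cap H = \emptyset] = \sum_{S\subseteq H} p^{|S|}(1-p)^{e(H)-|S|}\cdot \Pr\bigl[\text{every }e\in S\text{ lies in some $F$-copy of }G_1 \bigm| G_1 \cap H = S\bigr],
\]
and bound the inner conditional probability by a union bound over $|S|$-tuples of witness $F$-copies $(F'_e)_{e\in S}$, where each tuple contributes $p^{|\bigcup F'_e\setminus H|}\cdot\mathbbm{1}[\bigcup F'_e \cap H \subseteq S]$. If this sum can be shown to be at most $(C\epsilon^{e_F-1})^{|S|}$ for some $C = C(F)$, the outer sum collapses via the binomial identity to
\[
\bigl(1 - p(1 - C\epsilon^{e_F-1})\bigr)^{e(H)} \le \exp(-\delta e(H)p),
\]
with $\delta = 1/2$ upon choosing $\epsilon$ small so that $C\epsilon^{e_F-1} \le 1/2$.

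The main obstacle is establishing the inner bound $(C\epsilon^{e_F-1})^{|S|}$: when witnesses share edges (in the extreme, when a single $F$-copy serves as the witness for several edges of $S$), the naive factorisation fails and the resulting positive correlations must be carefully handled. The 2-balancedness of $F$ should be the key tool here, since it ensures that every subgraph $J' \subseteq F$ which could appear as the ``shared core'' of an overlap configuration satisfies $(e_{J'}-1)/(v_{J'}-2) \le m_2(F)$; this gives each such configuration an extra factor of $\epsilon$ so that summing over all overlap types still preserves the desired product bound.
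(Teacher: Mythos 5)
Your overall template---random $G_1 \sim G(n,p)$, make it $F$-free by deletion, then show each $H$ survives with probability $\exp(-\delta e(H) p)$ and union bound---matches the paper's. But your deletion step differs in a way that creates exactly the obstacle you identify and do not resolve: you delete \emph{every} edge lying in \emph{any} $F$-copy (the classical Erd\H{o}s alteration), and then must control the probability that all of $G_1 \cap H$ is deleted, which requires summing over tuples of witness copies with unbounded overlap structure. You concede that ``the naive factorisation fails and the resulting positive correlations must be carefully handled,'' and offer only a hope that 2-balancedness will save the day. As written, this is a genuine gap: the core per-$H$ estimate is not proved, and taming those overlaps is nontrivial---it is precisely the ``non-trivial large-deviation machinery'' of Gunby--He--Narayanan that this lemma is supposed to bypass.

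The paper sidesteps the issue by using Krivelevich's variant of the alteration instead. Rather than deleting every edge in any $J$-copy, it fixes a size-maximal collection $\cJ_{n,p}$ of pairwise \emph{edge-disjoint} $J$-copies in $G_{n,p}$ (where $J\subseteq F$ is 2-balanced achieving $m_2$) and deletes only the edges of those copies. Maximality still forces the result to be $J$-free, hence $F$-free. The payoff is in the analysis: for a fixed $H$, you only need (i) a Chernoff lower bound on $|E(G_{n,p})\cap E(H)|\sim\Bin(e(H),p)$, and (ii) an upper bound on the number of members of $\cJ_{n,p}$ meeting $H$. For (ii), because those members are guaranteed edge-disjoint, a union bound over $k$-element edge-disjoint families of $J$-copies meeting $H$ gives a clean product $\binom{e(H)n^{v_J-2}}{k}p^{e_J k}$---no overlap corrections at all. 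This is exactly the factorisation you were trying to force and couldn't. So the fix is not to grind through overlap types; it is to change the deletion rule so overlaps never enter. I would suggest replacing your $D$ with the edges of a maximal edge-disjoint packing of $J$-copies, after which the two-event scheme ($\cE_H$: enough edges of $H$ in $G_1$; $\cD_H$: few packing members touching $H$) closes the argument with elementary Chernoff and union bounds.

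One smaller point: your reduction ``replace $F$ by a 2-balanced $J\subseteq F$'' is fine in spirit and matches the paper, but note you need $e_J\ge 2$ (equivalently $v_J\ge 3$) for the $m_2$ formula and the deletion count to make sense; the paper gets this from the maximum-degree-$\ge 2$ hypothesis, and you should make that explicit rather than just asserting $m_2(F)\ge 1$.
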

In~\cite{GHN2021} a proof of Lemma~\ref{lem:main:technical} for triangles~$F=\Delta$ is given:
it is based on a refinement of a clever random graph alteration argument of Erd{\H{o}}s~\cite{erdos1961graph} from~1961, 
and requires the development of some non-trivial triangle-specific large-deviation machinery  
(see the proof of Theorem~1.4 in~\cite{GHN2021}, which is spread across Sections~2--3 therein). 
Our simpler and shorter proof for arbitrary~$F$ is based on a refinement of an elegant random graph alteration argument of Krivelevich~\cite{krivelevich1995bounding} from~1994 
(which was recently further developed in~\cite{GW2019}). 
\begin{proof}[Proof of Lemma~\ref{lem:main:technical}]
With foresight, set~$\eps:= \min_{x \ge 2}[1/(3xe^2)]^{1/(x-1)}=1/(6e^2)$ and~$\delta := 1/\max\{9,4e_F\}$, 
so that~$n \ge 1$ implies~${p \le \eps < 1}$. 
Furthermore, we pick a minimal subgraph~$J \subseteq F$ with~$m_2(J)=m_2(F)$. 
Note that~$e_J \ge 2$, since~$F$ contains a vertex of degree at least two.
Let~$\cJ_{n,p}$ be a size-maximal collection of edge-disjoint $J$-copies in~$G_{n,p}$. 
We construct~${G_{n,p}(J) \subseteq G_{n,p}}$ by removing all edges from~$G_{n,p}$ that are in some~$J$-copy from~$\cJ_{n,p}$. 
We will show that~${G_{n,p}(J)}$ satisfies the desired properties with positive probability. 
%
By construction~$G_{n,p}(J)$ is $J$-free (since any~$J$-copy in~$G_{n,p}(J)$ could be added to~$\cJ_{n,p}$, contradicting size-maximality of~$\cJ_{n,p}$), and therefore $F$-free.
Thus, it will suffice to show that with positive probability, $G_{n,p}(J)$ intersects every~$H \in \cH$. 
Intuitively, we will show this by proving that for any given~$H \in \cH$, the random graph~$G_{n,p}$ typically contains many edges in $H$ and~$\cJ_{n,p}$ contains few.

Turning to the details, fix a graph~$H \in \cH$. 
Note that~\eqref{equation:prob-assumption} implies that~$H$ has at least~${e(H) \ge 1}$ edges 
(since~$e(H) =0$ implies that the left-hand side of~\eqref{equation:prob-assumption} is at least one, contradicting our assumption). 
Let~$\cE_H$ denote the event that~$G_{n,p}$ contains at least~$e(H)p/2$ edges from~$H$.
Since the number~$|E(G_{n,p}) \cap E(H)|$ of common edges of~$G_{n,p}$ and~$H$ 
is a binomial random variable with distribution~$\Bin(e(H),p)$, 
using standard Chernoff bounds (such as~\cite[Theorem~2.1]{JLR}) it follows~that 
\begin{equation}
\Pr(\neg\cE_H) 
\le 
\Pr\bigpar{\Bin(e(H),p) \le e(H)p/2}
\le \exp\Bigpar{-e(H)p/8} . 
\end{equation}
Let~$\cD_H$ denote the event that~$\cJ_{n,p}$ contains at most $m:= e(H)p/(3e_J)$ many $J$-copies that share an edge with~$H$. 
Minimality of~$J \subseteq F$ implies~$m_2(F)=m_2(J)=(e_J-1)/(v_J-2)$, so that
\[
{n^{v_J-2}p^{e_J-1} \le \eps^{e_J-1}} \le 1/(3e_Je^2)
\]
 by definition of~$\eps$. 
Note that if~$\cD_H$ fails, then there is a subcollection~$\cJ \subseteq \cJ_{n,p}$ of exactly~${|\cJ|=\ceil{m}}$ many $J$-copies that are all edge-disjoint.
Since there are at most~$e(H)n^{v_J-2}$ possible $J$-copies that can share an edge with~$H$, 
using a standard union bound argument and~$\binom{n}{x} \le (ne/x)^x$ it follows~that 
\begin{equation}
\Pr(\neg\cD_H) 
\le \binom{e(H)n^{v_J-2}}{\ceil{m}}p^{e_J \ceil{m}} 
\le \biggpar{\frac{e(H)n^{v_J-2}p^{e_J}e}{\ceil{m}}}^{\ceil{m}}
\le \biggpar{\frac{e(H)p}{3e_Je\ceil{m}}}^{\ceil{m}} 
\le \exp\Bigpar{-e(H)p/(3e_J)}.
\end{equation}
Note that if~$\cE_H \cap \cD_H$ both hold, then~$G_{n,p}(J)$ contains at least
\[
\ceil{e(H)p/2}-e_J \cdot \floor{m} \ge e(H)p \cdot (1/2-1/3) > 0
\] 
many edges from~$H$ (as~$e(H) \ge 1$ and~$p>0$).
Using a standard union bound argument, it thus follows that the probability that~$G_{n,p}(J)$ does not share an edge with some graph~$H\in\cH$ is at most
\begin{equation}\label{eq:main:upper}
\sum_{H\in\cH} \Bigsqpar{\Pr(\neg\cE_H) + \Pr(\neg\cD_H) } < 2\sum_{H\in\cH} \exp\Bigpar{-\delta e(H)p} \le 1,
\end{equation}
which by the probabilistic method establishes existence of the desired $F$-free graph~$G$. 
\end{proof}

To complete the proof of Theorem~\ref{thm:main} for the expectation threshold~$q(\cF_n)$,
i.e., that we have the polynomial gap~$q(\cF_n) \ge n^{\gamma} \pc(\cF_n)$ for all~$n \ge n_0$, 
in view of estimates~\eqref{eq:pc} and~\eqref{eq:qc} it remains to show that 
\begin{equation}\label{eq:mm2}
m_2(F) > m(F)
\end{equation}  
for any graph~$F$ with maximum degree at least two.
To establish~\eqref{eq:mm2}, we pick a minimal subgraph~$J \subseteq F$ with~$m(J)=m(F)$. 
Since~$F$ contains a vertex of degree at least two, it follows that~$m(J)=\tfrac{e_J}{v_J} \ge 2/3 > 1/2$ and~$e_J \ge 2$ (and thus~$v_J \ge 3$), 
which in turn implies~$m(F) = m(J) = \tfrac{e_J}{v_J} < \tfrac{e_J-1}{v_J-2} \le m_2(F)$, as desired.

\subsection{Fractional Expectation Threshold: Polynomial Gap}\label{sec:fractional}
Viewing the certificate~$\cG$ in the definition of the expectation threshold as an integral map from~$2^X$ to~$\{0,1\}$, 
one naturally arrives at the following fractional relaxation (see also~\cite{TL1995,TL}). 
The \emph{fractional expectation threshold}~$\qf(\cF)$ of a down-set~$\cF$ is defined as the smallest value of~$p \in [0,1]$ for which there exists a `fractional certificate' function~$\lambda:2^X \to [0,\infty)$ 
with~$\sum_{S \subseteq X}\lambda(S)(1-p)^{|X \setminus S|} \le 1/2$ and~$\sum_{S \supseteq F}\lambda(S) \ge 1$ for all~$F \in \cF$; see~\cite{GHN2021}. 
Although not important for this note, it is instructive to note that~$\pc(\cF) \le \qf(\cF) \le q(\cF)$ follows easily: 
for~$\qf(\cF) \le q(\cF)$ this is immediate, 
and for~$\pc(\cF) \le \qf(\cF)$ the crux is that for~$p=\qf(\cF)$ we have 
\[
\mu_p(\cF) 
\le \sum_{F \in \cF}\mu_p(F)\sum_{S \supseteq F}\lambda(S) 
\le \sum_{S \subseteq X}\lambda(S) \mu_p(\{F: F \subseteq S\}) 
= \sum_{S \subseteq X}\lambda(S)(1-p)^{|X \setminus S|} 
\le 1/2. 
\]

Similarly as Lemma~\ref{lem:main:technical} implies the expectation threshold lower bound~\eqref{eq:qc}, 
here Lemma~\ref{lem:main:technical:frac} below (which is a minor modification of Lemma~\ref{lem:main:technical}) implies the fractional expectation threshold lower bound
\begin{equation}\label{eq:qfc}
\qf(\cF_n) = \Omega(n^{-1/m_2(F)}) ,
\end{equation} 
which together with estimates~\eqref{eq:pc} and~\eqref{eq:mm2} then completes the proof of Theorem~\ref{thm:main}. 
\begin{lemma}\label{lem:main:technical:frac}
For any graph~$F$ with maximum degree at least two, 
there are constants ${\eps \in (0,1/2]}$ and $\delta=\delta(F)\in (0,1/2]$ such that the following holds for all~$n \ge 1$ and $0 < p \le \eps n^{-1/m_2(F)}$. 
If~$\cH$ is a collection of graphs on vertex-set~$[n]$ and~$\lambda: \cH \to [0,\infty)$ is a function satisfying
\begin{equation}\label{equation:prob-assumption:frac}
\sum_{H\in\cH}\lambda(H)\exp\Bigpar{-\delta e(H)p} \le 1/2, 
\end{equation}
then there is an $F$-free graph~$G$ on vertex-set~$[n]$ satisfying 
\begin{equation}\label{equation:conclusion:frac}
\sum_{H\in\cH(G)}\lambda(H) < 1, 
\end{equation}
where~$\cH(G)$ is the collection of all graphs~$H \in \cH$ that share no edges with~$G$. 
\end{lemma}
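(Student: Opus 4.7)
The plan is to re-run the random-alteration argument of Lemma~\ref{lem:main:technical} verbatim, and then replace the final union bound by a linearity-of-expectation (weighted first moment) calculation. Concretely, with the same minimal subgraph $J\subseteq F$ satisfying $m_2(J)=m_2(F)$ and the same constants $\eps,\delta$, I would take $G:=G_{n,p}(J)$, i.e., the $F$-free graph obtained from $G_{n,p}$ by deleting all edges that lie in a size-maximal edge-disjoint collection $\cJ_{n,p}$ of $J$-copies.

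The key observation is that the per-graph probability estimate proved in Lemma~\ref{lem:main:technical} does not depend on what the other graphs in $\cH$ are: defining the events $\cE_H$ and $\cD_H$ exactly as in that proof for each fixed $H\in\cH$ with $e(H)\ge 1$, the same Chernoff and union-bound estimates give
\[
\Pr\bigpar{G_{n,p}(J)\text{ shares no edge with }H} \le \Pr(\neg\cE_H)+\Pr(\neg\cD_H) < 2\exp\bigpar{-\delta e(H)p};
\]
for the degenerate case $e(H)=0$ the same bound holds trivially, since the probability in question is $1<2$.

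To conclude, I would set $X:=\sum_{H\in\cH(G_{n,p}(J))}\lambda(H)$ and apply linearity of expectation together with the hypothesis~\eqref{equation:prob-assumption:frac}, obtaining
\[
\E[X] = \sum_{H\in\cH}\lambda(H)\,\Pr\bigpar{G_{n,p}(J)\text{ shares no edge with }H} < 2\sum_{H\in\cH}\lambda(H)\exp\bigpar{-\delta e(H)p} \le 1,
\]
so the probabilistic method produces a realization $G=G_{n,p}(J)$, which is $F$-free by construction, satisfying $\sum_{H\in\cH(G)}\lambda(H)<1$, as required. Since the per-$H$ probability bound is imported verbatim from Lemma~\ref{lem:main:technical}, no new technical obstacle should arise; the only modification is a cosmetic one at the outer layer, swapping the counting union bound for a $\lambda$-weighted first moment.
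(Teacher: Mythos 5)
Your proposal is correct and matches the paper's own proof essentially verbatim: both reuse the $F$-free graph $G_{n,p}(J)$ and the per-$H$ bounds $\Pr(\neg\cE_H)+\Pr(\neg\cD_H)<2\exp(-\delta e(H)p)$ from Lemma~\ref{lem:main:technical}, handle the $e(H)=0$ case trivially, and replace the union bound with the $\lambda$-weighted first moment $\E X<1$ via linearity of expectation.
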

\begin{proof}
This proof is a minor variant of the proof of Lemma~\ref{lem:main:technical}, essentially replacing the union bound by linearity of expectation.
In particular, using the same $F$-free subgraph~${G_{n,p}(J) \subseteq G_{n,p}}$ as before, set
\begin{equation}
X := \sum_{H\in\cH(G_{n,p}(J))}\lambda(H) = \sum_{H \in \cH}\lambda(H) \indic{E(G_{n,p}(J) \cap E(H) = \emptyset},
\end{equation}
where~$\indic{E(G_{n,p}(J) \cap E(H) = \emptyset}$ is the indicator variable for the event 
that~$G_{n,p}(J)$ shares no edges with~$H$. 
The reasoning leading to~\eqref{eq:main:upper} shows that, for any graph~$H \in \cH$ with at least~$e(H) \ge 1$ edges, we~have
\begin{equation}\label{eq:frac:upper:pr}
\Pr\bigpar{E(G_{n,p}(J)) \cap E(H) = \emptyset} \le \Pr(\neg\cE_H) + \Pr(\neg\cD_H) < 2\exp\Bigpar{-\delta e(H)p}.
\end{equation}
In the remaining exceptional case~$e(H)=0$ the right-hand side of~\eqref{eq:frac:upper:pr} equals two, 
which means that this probability upper bound trivially remains valid. 
Using linearity of expectation it thus follows~that 
\begin{equation}
\E X = \sum_{H \in \cH}\lambda(H) \Pr\bigpar{E(G_{n,p}(J)) \cap E(H) = \emptyset} < 2 \sum_{H\in\cH}\lambda(H)\exp\Bigpar{-\delta e(H)p} \le 1,
\end{equation}
which by the probabilistic method establishes existence of the desired $F$-free graph~$G$ satisfying~\eqref{equation:conclusion:frac}. 
\end{proof}

\subsection{Concluding Remarks}\label{sec:conclusion}
On first sight it might be surprising that the (fractional) expectation thresholds~$q(\cF_n),\qf(\cF_n)$ seem more difficult to calculate than the threshold~$\pc(\cF_n)$ itself, since it is usually the other way round. 
An intuitive explanation is as follows: the down-set of $F$-free graphs~$\cF_n$ has the special property that its complement turns out to be an `easy' and well-understood up-set (i.e., the up-set of graphs containing~$F$), which is usually not the case. 
The main remaining open problem is to determine the order of magnitude\footnote{As pointed out by Jo{\~a}o Pedro de Abreu Marciano and Rajko Nenadov, using hypergraph container theorems one can easily obtain the upper bounds~$\qf(\cF) \le q(\cF) = O(n^{-1/m_2(F)}\log n)$, which demonstrates that our lower bounds~$q(\cF) \ge \qf(\cF) = \Omega(n^{-1/m_2(F)})$ are nearly best-possible.} of the (fractional) expectations thresholds~$q(\cF_n),\qf(\cF_n)$ for the down-set of~\mbox{$F$-free} graphs.

\bigskip{\noindent\bf Acknowledgements.} 
The results of this short note were proved after listening to Bhargav Narayanan's lecture on~\cite{GHN2021} 
during the Oberwolfach workshop “Combinatorics, Probability and Computing” in April~2022. 
We are grateful to the MFO institute for their hospitality and great working conditions.
We also thank Bhargav Narayanan, Jo{\~a}o Pedro de Abreu Marciano, Rajko Nenadov, Jinyoung Park and the referees for helpful correspondence and suggestions.

\small
\bibliographystyle{plain}


\normalsize

\end{document}